\undefined \DeclareGraphicsRule{*}{eps}{*}{} \else
\newtheorem{theorem}{Theorem}[section]
\newtheorem{lemma}[theorem]{Lemma}
\newtheorem{conj}[theorem]{Conjecture}
\newcommand{\F}{{\mathbb F}}
\begin{document}

\title{On the modular Erd\H{o}s-Burgess constant}

\author{
Jun Hao$^{a}$  \ \ \ \ \ \
Haoli Wang$^{b}$\thanks{Corresponding author's Email:
bjpeuwanghaoli@163.com}  \ \ \ \ \ \
Lizhen Zhang$^{a}$
\\
\\
$^{a}${\small Department of Mathematics, Tianjin Polytechnic University, Tianjin, 300387, P. R. China}\\
\\
$^{b}$ {\small College of Computer and Information Engineering,}\\
{\small Tianjin Normal University, Tianjin, 300387, P. R. China}\\
}

\date{}
\maketitle

\begin{abstract} Let $n$ be a positive integer. For any integer $a$, we say that $a$ is idempotent modulo $n$ if $a^2\equiv a\pmod n$. The $n$-modular Erd\H{o}s-Burgess constant is the
smallest positive integer $\ell$  such that any $\ell$ integers contain one or more integers whose product
is idempotent modulo $n$.
We gave a sharp lower bound of the $n$-modular Erd\H{o}s-Burgess constant,
in particular, we
determined the $n$-modular Erd\H{o}s-Burgess constant in the case when $n$ is a prime power or a product of pairwise distinct primes.
\end{abstract}

\noindent{\sl Key Words}: Erd\H{o}s-Burgess constant; Davenport constant; Modular Erd\H{o}s-Burgess constant.

\section {Introduction}

Let $\mathcal{S}$ be a finite multiplicatively written  commutative semigroup with identity $1_{\mathcal{S}}$. By a sequence over
$\mathcal{S}$, we mean a finite unordered sequence of terms from $\mathcal{S}$ where repetition is allowed. For a sequence $T$
over $\mathcal{S}$ we denote by $\pi(T)\in \mathcal{S}$ the product of its terms and we say that $T$ is a product-one sequence
if $\pi(T) = 1_{\mathcal{S}}$. If $\mathcal{S}$ is a finite abelian group, the Davenport constant ${\rm D}(\mathcal{S})$ of $\mathcal{S}$ is the smallest positive integer $\ell$  such that every sequence $T$
over $\mathcal{S}$ of length $|T|\geq \ell$ has a nonempty product-one subsequence. The Davenport constant has mainly
been studied for finite abelian groups but also in more general settings (we refer to \cite{GH, GeroldingerRuzsa, Grynkiewiczmono, taovu, GaoGero}  for work in the setting of abelian groups, to \cite{CziszterDomoGero,GaoLiPeng} for work in case of
non-abelian groups, and to \cite{wangDavenportII,wangAddtiveirreducible, wanggao,gaowangII,wang-zhang-qu} for work in commutative semigroups).

In the present paper we study the Erd\H{o}s-Burgess constant $\textsc{I}(\mathcal{S})$ of $\mathcal{S}$ which is defined as the smallest positive integer
$\ell$ such that every sequence $T$ over $\mathcal{S}$ of length $|T|\geq \ell$ has a non-empty subsequence $T'$
whose product $\pi(T')$ is an idempotent of $\mathcal{S}$. Clearly, if $\mathcal{S}$ happens to be a finite abelian group, then the unique idempotent of $\mathcal{S}$ is the identity  $1_{\mathcal{S}}$, whence $\textsc{I}(\mathcal{S}) = {\rm D}(\mathcal{S})$. The study of $\textsc{I}(\mathcal{S})$ for general semigroups was initiated by a question
of Erd\H{o}s and has found renewed attention in recent
years (e.g., \cite{Burgess69,Gillam72,wangStruucture,wangErdos-burgess,wang-Hao-zhang}).
For a commutative unitary ring $R$, let $\mathcal{S}_R$ be the multiplicative semigroup of the ring $R$, and $R^{\times}$ the group of units of $R$. Notice that the group $R^{\times}$ is a subsemigroup of the semigoup $\mathcal{S}_R$. We state our main result.

\begin{theorem}\label{Theorem main}
\  Let $n>1$ be an integer, and let $R=\mathbb{Z}_n$ be the ring of integers modulo $n$.
Then
$${\rm I}(\mathcal{S}_R)\geq {\rm D}(R^{\times})+\Omega(n)-\omega(n),$$ where $\Omega(n)$ is the number of primes occurring in the prime-power decomposition of $n$ counted with
multiplicity, and $\omega(n)$ is the number of distinct primes.  Moreover, if $n$ is a prime power
or a product of pairwise distinct primes, then equality holds.
\end{theorem}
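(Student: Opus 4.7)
The starting point is the Chinese Remainder Theorem: write $n = p_1^{a_1} \cdots p_k^{a_k}$ with $k = \omega(n)$, so that $R \cong \prod_{i=1}^{k} \mathbb{Z}_{p_i^{a_i}}$. Each local factor $\mathbb{Z}_{p_i^{a_i}}$ admits only two idempotents, namely $0$ and $1$, because $e(e-1)\equiv 0\pmod{p_i^{a_i}}$ together with $\gcd(e,e-1)=1$ forces $p_i^{a_i}$ to divide one of $e$ or $e-1$. Consequently an element of $R$ is idempotent if and only if each of its CRT coordinates lies in $\{0,1\}$; this is the single structural observation I will rely on throughout, and I would install it before anything else.

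For the lower bound I will exhibit a sequence of length ${\rm D}(R^{\times}) + \Omega(n) - \omega(n) - 1$ with no idempotent subproduct. Choose an extremal product-one-free sequence $W$ over $R^{\times}$ of length ${\rm D}(R^{\times}) - 1$, and for each $i$ fix $q_i \in R$ whose $i$-th CRT coordinate equals $p_i$ and whose remaining coordinates equal $1$; form $T$ by concatenating $W$ with $a_i - 1$ copies of each $q_i$. For any nonempty subsequence $T' \subseteq T$ consisting of $m_i$ copies of $q_i$ (with $0 \le m_i \le a_i - 1$) together with a subsequence $W' \subseteq W$, the $i$-th CRT coordinate of $\pi(T')$ equals $p_i^{m_i}$ times a unit of $\mathbb{Z}_{p_i^{a_i}}$. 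If any $m_i \ge 1$, this coordinate has $p_i$-adic valuation in $\{1,\ldots,a_i-1\}$, so it is neither $0$ nor $1$ and $\pi(T')$ is not idempotent. Therefore all $m_i$ vanish, $T' = W'$ is a nonempty subsequence of $W$, and idempotency would force $\pi(W') = 1$, contradicting the choice of $W$.

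For the upper bound I would treat the two cases of the theorem separately. If $n = p^a$, split a sequence $T$ of length ${\rm D}(R^{\times}) + a - 1$ into its units $U$ and its non-units $N$: if $|U| \ge {\rm D}(R^{\times})$, the Davenport constant supplies a nonempty subsequence of $U$ with product $1$, which is idempotent; otherwise $|N| \ge a$, and since every term of $N$ is divisible by $p$, the product of the entirety of $N$ is divisible by $p^a$ and equals the idempotent $0$. If $n = p_1 \cdots p_k$ is squarefree, every local factor $\mathbb{Z}_{p_i}$ is a field, so for each $t \in T$ define $\widetilde{t} \in R^{\times}$ by replacing each zero CRT coordinate of $t$ by $1$. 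Then $\widetilde{T}$ is a sequence over $R^{\times}$ of length $\ge {\rm D}(R^{\times})$, hence has a nonempty subsequence with product $1_{R^{\times}}$; the corresponding subsequence of $T$ has each CRT coordinate equal either to $0$ (if some term originally contributed a zero there) or to $1$, hence is idempotent.

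The main obstacle is the lower-bound construction: one needs elements that contaminate a single local factor without touching the others, and one must verify that this contamination cannot be absorbed by the units so as to produce an idempotent product. The element $q_i$ with CRT coordinates $(1,\ldots,1,p_i,1,\ldots,1)$ together with the valuation argument is exactly what accomplishes this. The fact that the upper-bound proof splits into two disjoint cases — killing non-units by nilpotency in prime-power factors versus lifting zeros to ones in field factors — reflects that these two techniques do not cleanly combine for a general $n$, which is precisely why the theorem stops short of asserting equality in the mixed case.
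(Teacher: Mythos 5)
Your proposal is correct and follows essentially the same route as the paper: the same characterization of idempotents of $\mathbb{Z}_n$ (the paper's Lemma~3.1), the same lower-bound construction of length ${\rm D}(R^{\times})-1+\Omega(n)-\omega(n)$ verified by the same $p_i$-adic valuation argument, and the same two-case upper bound (pigeonhole on units versus multiples of $p$ when $n=p^a$, and lifting zero CRT coordinates to $1$ and invoking ${\rm D}(R^{\times})$ when $n$ is squarefree). The only cosmetic difference is that you adjoin elements $q_i$ with CRT coordinates $(1,\ldots,1,p_i,1,\ldots,1)$ where the paper simply adjoins $k_i-1$ copies of the integer $p_i$ itself, whose other coordinates are units rather than $1$ --- the valuation argument is unaffected.
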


\section{Notation}

Let $\mathcal{S}$ be a finite  multiplicatively written
commutative semigroup with the binary operation *. An element $a$ of $\mathcal{S}$ is said to be idempotent if $a*a=a$. Let
${\rm E}(\mathcal{S})$ be the set of idempotents of $\mathcal{S}$.
We introduce sequences over semigroups and follow the notation and terminology of Grynkiewicz and
others (cf. [\cite{Grynkiewiczmono}, Chapter 10] or \cite{CziszterDomoGero, Grynkiewicz13}).
Sequences over $\mathcal{S}$ are considered as elements
in the free abelian monoid $\mathcal{F}(\mathcal{S})$ with basis $\mathcal{S}$.
In order to avoid confusion between the multiplication in $\mathcal{S}$ and multiplication in $\mathcal{F}(\mathcal{S})$,
we denote multiplication in $\mathcal{F}(\mathcal{S})$ by the boldsymbol $\cdot$ and we use brackets for all exponentiation in $\mathcal{F}(\mathcal{S})$.
In particular, a sequence $\mathcal{S}\in \mathcal{F}(\mathcal{S})$ has the form
\begin{equation}\label{equation form of sequences}
T=a_1a_2\cdot\ldots\cdot a_{\ell}=\mathop{\bullet}\limits_{i\in [1,\ell]} a_i=\mathop{\bullet}\limits_{a\in \mathcal{S}} a^{[{\rm v}_a(T)]}\in \mathcal{F}(\mathcal{S})
\end{equation}
where $a_1,\ldots,a_{\ell}\in \mathcal{S}$ are the terms of $T$, and ${\rm v}_a(T)$ is the multiplicity of the term $a$ in $T$. We call $|T|=\ell=\sum\limits_{a\in \mathcal{S}} {\rm v}_a(T)$ the length of $T$.
Moreover, if $T_1, T_2\in \mathcal{F}(\mathcal{S})$ and $a_1,a_2 \in \mathcal{S}$, then $T_1\cdot T_2\in \mathcal{F}(\mathcal{S})$
has length $|T_1|+|T_2|, T_1\cdot a_1 \in \mathcal{F}(\mathcal{S})$ has length $|T_1|+1$,  $a_1\cdot a_2\in \mathcal{F}(\mathcal{S})$
is a sequence of length $2$. If $a \in \mathcal{S}$ and $k\in \mathbb{N}_0$, then
$a^{[k]}=\underbrace{a\cdot\ldots\cdot a}_{k}\in \mathcal{F}(\mathcal{S}).$ Any sequence $T_1\in \mathcal{F}(\mathcal{S})$ is called a subsequence of $T$ if ${\rm v}_a(T_1)\leq {\rm v}_a(T)$ for every element $a\in \mathcal{S}$, denoted $T_1\mid T$. In particular,
if $T_1\neq T$, we call $T_1$ a {\sl proper} subsequence of $T$, and let $T\cdot   T_1^{[-1]}$ denote the sequence resulting by removing the terms of $T_1$ from $T$.

Let $T$ be a sequence as in \eqref{equation form of sequences}. Then
\begin{itemize}
     \item $\pi(T)=a_1*\cdots *a_{\ell}$ is the product of all terms of $T$, and
     \item $\prod(T)=\{\prod_{j\in J} a_j: \emptyset \neq J\subset [1,\ell]\}\subset\mathcal{S}$ is the set of subsequence products of $T$.
         \end{itemize}
         We say that $T$ is
\begin{itemize}
     \item a {\sl product-one sequence} if $\pi(T)=1_{\mathcal{S}},$
     \item an {\sl idempotent-product sequence} if $\pi(T)\in {\rm E}(\mathcal{S}),$
     \item {\sl product-one free} if $1_{\mathcal{S}}\notin \prod(T),$
     \item {\sl idempotent-product free} if ${\rm E}(\mathcal{S})\cap \prod(T)=\emptyset.$
\end{itemize}

Let $n>1$ be an integer. For any integer $a$, we denote $\bar{a}$ the congruence class of $a$ modulo $n$. Any integer $a$ is said to be {\sl idempotent modulo $n$} if $aa\equiv a\pmod n$, i.e., $\bar{a}\bar{a}=\bar{a}$ in $\mathbb{Z}_n$.
A sequence $T$ of integers is said to be {\sl idempotent-product free modulo $n$} provided that $T$ contains no nonempty subsequence $T'$ with $\pi(T')$ being idempotent modulo $n$. We remark that saying a sequence $T$ of integers is idempotent-product free modulo $n$ is equivalent to saying the sequence $\mathop{\bullet}\limits_{a\mid T} \bar{a}$ is idempotent-product free in the multiplicative semigroup of the ring $\mathbb{Z}_n$.

\section{Proof of Theorem \ref{Theorem main}}

\begin{lemma} \label{Lemma idempotent form} Let $n=p_1^{k_1}p_2^{k_2}\cdots p_r^{k_r}$ be a positive integer where $r\geq 1,$ $k_1,k_2,\ldots,k_r\geq 1$, and
$p_1, p_2, \ldots,p_r$ are distinct primes.
For any integer $a$, the congruence $a^2\equiv a\pmod n$ holds if and only if $a\equiv 0\pmod {p_i^{k_i}}$ or $a\equiv 1\pmod {p_i^{k_i}}$ for every $i\in [1,r]$.
\end{lemma}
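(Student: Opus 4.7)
The plan is to reduce to the prime-power case via the Chinese Remainder Theorem and then handle a single prime power by an elementary coprimality argument. Since $p_1^{k_1},\ldots,p_r^{k_r}$ are pairwise coprime, the congruence $a^2\equiv a\pmod n$ is equivalent, by CRT, to the system of congruences $a^2\equiv a\pmod{p_i^{k_i}}$ for all $i\in[1,r]$. So once the single-prime-power case is settled, the lemma follows immediately by taking the conjunction of the per-factor conditions.

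For the single-prime-power case, I would argue as follows. Suppose $p$ is prime, $k\geq 1$, and $a^2\equiv a\pmod{p^k}$; equivalently $p^k\mid a(a-1)$. Since consecutive integers are coprime, $\gcd(a,a-1)=1$, hence $\gcd(p,a)=1$ or $\gcd(p,a-1)=1$. In the first case, no factor of $p$ in $p^k$ can come from $a$, so $p^k\mid a-1$, giving $a\equiv 1\pmod{p^k}$; in the second case symmetrically $p^k\mid a$, giving $a\equiv 0\pmod{p^k}$. The converse is trivial: if $a\equiv 0$ or $a\equiv 1\pmod{p^k}$, then $a(a-1)\equiv 0\pmod{p^k}$, so $a^2\equiv a\pmod{p^k}$.

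Combining, $a^2\equiv a\pmod n$ holds iff for each $i\in[1,r]$ we have $a\equiv 0\pmod{p_i^{k_i}}$ or $a\equiv 1\pmod{p_i^{k_i}}$, which is the claim. There is no real obstacle here; the only point needing care is the coprimality step $\gcd(a,a-1)=1$, which forces all the $p$-adic valuation of $a(a-1)$ to sit entirely in one of the two factors, and hence forces one of $a,\,a-1$ to be divisible by the whole prime power $p^{k_i}$ rather than splitting the divisibility between them.
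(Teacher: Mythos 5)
Your proof is correct and follows essentially the same route as the paper: both reduce $a^2\equiv a\pmod n$ to the divisibility $p_i^{k_i}\mid a(a-1)$ for each $i$ (your explicit CRT step is implicit in the paper) and then use $\gcd(a,a-1)=1$ to force $p_i^{k_i}$ to divide $a$ or $a-1$ entirely. No gaps; your treatment is just a slightly more detailed write-up of the paper's one-line argument.
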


\begin{proof} \ Note that $a^2\equiv a\pmod n$ if and only if ${p_i^{k_i}}$ divides $a(a-1)$ for all $i\in [1,r]$. Since $\gcd(a, a-1)=1$,  it follows that $a^2\equiv a\pmod n$ holds if and only if ${p_i^{k_i}}$ divides $a$ or $a-1$, i.e., $a\equiv 0\pmod {p_i^{k_i}}$ or $a\equiv 1\pmod {p_i^{k_i}}$ for every $i\in [1,r]$, completing the proof. \end{proof}

\noindent {\sl Proof of Theorem  \ref{Theorem main}.} \ Say
\begin{equation}\label{equation factorization of n}
n=p_1^{k_{1}}p_2^{k_{2}}\cdots p_r^{k_r},
\end{equation}
 where $p_1,p_2, \ldots,p_r$ are distinct primes and $k_i\geq 1$ for all $i\in [1,r]$.
 Observe that
 \begin{equation}\label{equation bigomega(K)}
 \Omega(n)=\sum\limits_{i=1}^r k_i
 \end{equation}
 and \begin{equation}\label{equation smallomega(K)}
 \omega(n)=r.
 \end{equation}

 Take a sequence $V$ of integers of of length ${\rm D}(R^{\times})-1$ such that \begin{equation}\label{equation bar V in Rtimes}
 \mathop{\bullet}\limits_{a\mid V} \bar{a}\in \mathcal{F}(R^{\times})
 \end{equation}
  and
 \begin{equation}\label{equation bar 1 not in prod}
 \bar{1}\notin\prod(\mathop{\bullet}\limits_{a\mid V} \bar{a}).
 \end{equation}
  Now we show that the sequence $V\cdot (\mathop{\bullet}\limits_{i\in [1,r]} p_i^{[k_i-1]})$ is idempotent-product free modulo $n$. Suppose to the contrary that $V\cdot (\mathop{\bullet}\limits_{i\in [1,r]} p_i^{[k_i-1]})$ contains a {\bf nonempty} subsequence $W$, say $W=V'\cdot (\mathop{\bullet}\limits_{i\in [1,r]} p_i^{[\beta_i]})$,  such that $\pi(W)$ is idempotent modulo $n$, where $V'$ is a subsequence of $V$ and $$\beta_i\in [0,k_i-1]\mbox{ for all } i\in [1,r].$$ It follows that
\begin{equation}\label{equation theta sigma(W)}
\pi(W)=\pi(V')p_1^{\beta_1}\cdots p_r^{\beta_r}.
\end{equation}
If $\sum\limits_{i=1}^r\beta_i=0$, then $W=V'$ is a {\sl nonempty} subsequence of $V$. By \eqref{equation bar V in Rtimes} and \eqref{equation bar 1 not in prod}, there exists some $t\in [1,r]$ such that $\pi(W)\not \equiv 0 \pmod {p_t^{k_t}}$ and $\pi(W)\not \equiv 1 \pmod {p_t^{k_t}}$. By Lemma \ref{Lemma idempotent form}, $\pi(W)$ is not idempotent modulo $n$,  a contradiction.
Otherwise, $\beta_j>0$ for some $j\in [1,r]$, say
\begin{equation}\label{equation beta1 in [1,n1-1]}
\beta_1\in [1,k_1-1].
\end{equation}
 Since $\gcd(\pi(V'),p_1)=1$, it follows from \eqref{equation theta sigma(W)} that $\gcd(\pi(W),p_1^{k_1})=p_1^{\beta_1}$. Combined with \eqref{equation beta1 in [1,n1-1]}, we have that $\pi(W)\not\equiv 0\pmod {p_1^{k_1}}$ and
$\pi(W)\not\equiv 1 \pmod {p_1^{k_1}}$. By
Lemma \ref{Lemma idempotent form}, we conclude that $\pi(W)$ is not idempotent modulo $n$, a contradiction.
This proves that the sequence $V\cdot (\mathop{\bullet}\limits_{i\in [1,r]} p_i^{[k_i-1]})$ is idempotent-product free modulo $n$.
Combined with \eqref{equation bigomega(K)} and \eqref{equation smallomega(K)}, we have that
\begin{equation}\label{equation I(S)geq in case prime power}
{\rm I}(\mathcal{S}_R)\geq |V\cdot (\mathop{\bullet}\limits_{i\in [1,r]} p_i^{[k_i-1]})|+1=(|V|+1)+\sum\limits_{i=1}^r (k_i-1)=
{\rm D}(R^{\times})+\Omega(n)-\omega(n).
\end{equation}

Now we assume that $n$ is a prime power or a product of pairwise distinct primes, i.e., either $r=1$ or $k_1=\cdots =k_r=1$ in \eqref{equation factorization of n}. It remains to show the equality ${\rm I}(\mathcal{S}_R)=
{\rm D}(R^{\times})+\Omega(n)-\omega(n)$ holds. We distinguish two cases.

\noindent \textbf{Case 1.} \ $r=1$ in \eqref{equation factorization of n}, i.e., $n=p_1^{k_1}$.

Take an arbitrary sequence $T$ of integers of length $|T|={\rm D}(R^{\times})+k_1-1={\rm D}(R^{\times})+\Omega(n)-\omega(n)$.
 Let $T_1=\mathop{\bullet}\limits_{\stackrel{a\mid T}{a\equiv 0 \pmod {p_1}}} a$ and $T_2=T\cdot T_1^{[-1]}$.
 By the Pigeonhole Principle, we see that either $|T_1|\geq k_1$ or $|T_2|\geq {\rm D}(R^{\times})$. It follows that either $\pi(T_1)\equiv 0 \pmod {p_1^{k_1}}$, or $\bar{1}\in\prod(\mathop{\bullet}\limits_{a\mid T_2} \bar{a})$. By Lemma \ref{Lemma idempotent form}, the sequence $T$ is not idempotent-product free modulo $n$, which implies that ${\rm I}(\mathcal{S}_R)\leq {\rm D}(R^{\times})+\Omega(n)-\omega(n)$. Combined with \eqref{equation I(S)geq in case prime power}, we have that ${\rm I}(\mathcal{S}_R)={\rm D}(R^{\times})+\Omega(n)-\omega(n).$

\noindent \textbf{Case 2.} \ $k_1=\cdots =k_r=1$ in \eqref{equation factorization of n}, i.e., $n=p_1p_2\cdots p_r$.

Then
\begin{equation}\label{equation bigomega=smallomega}
\Omega(n)=\omega(n)=r.
\end{equation}
Take an arbitrary sequence $T$ of integers of length $|T|={\rm D}(R^{\times})$. By the Chinese Remainder Theorem,
for any term $a$ of $T$ we can take an integer $a'$  such that for each $i\in [1,r]$,
\begin{equation}\label{equaiton tilde a}
a'\equiv\left\{ \begin{array}{ll}
1 \pmod {p_i} & \textrm{if $a\equiv 0 \pmod {p_i}$;}\\
a \pmod {p_i} & \textrm{otherwise.}\\
\end{array} \right.
\end{equation}
Note that $\gcd(a', n)=1$ and thus $\mathop{\bullet}\limits_{a\mid T}\bar{a'}\in \mathcal{F}(R^{\times})$. Since $|\mathop{\bullet}\limits_{a\mid T}\bar{a'}|=|T|={\rm D}(R^{\times})$, it follows that $\bar{1}\in \prod (\mathop{\bullet}\limits_{a\mid T}\bar{a'})$, and so there exists a {\bf nonempty} subsequence $W$ of $T$ such that $\prod\limits_{a\mid W}a'\equiv 1 \pmod {p_i}$ for each $i\in [1,r]$. Combined with \eqref{equaiton tilde a}, we derive that $\pi(W)\equiv 0\pmod {p_i}$ or $\pi(W)\equiv 1 \pmod {p_i}$, where $i\in [1,r]$.
By Lemma \ref{Lemma idempotent form}, we conclude that $\pi(W)$ is idempotent modulo $n$. Combined with \eqref{equation bigomega=smallomega}, we have that ${\rm I}( \mathcal{S}_R)\leq {\rm D}(R^{\times})={\rm D}(R^{\times})+\Omega(n)-\omega(n)$. It follows from \eqref{equation I(S)geq in case prime power} that ${\rm I}(\mathcal{S}_R)={\rm D}(R^{\times})+\Omega(n)-\omega(n)$, completing the proof. \qed

We close this paper with the following conjecture.

\begin{conj}
\  Let $n>1$ be an integer, and let $R=\mathbb{Z}_n$ be the ring of integers modulo $n$.
Then
${\rm I}(\mathcal{S}_R)={\rm D}(R^{\times})+\Omega(n)-\omega(n).$
\end{conj}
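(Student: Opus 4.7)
The lower bound ${\rm I}(\mathcal{S}_R) \geq {\rm D}(R^{\times}) + \Omega(n) - \omega(n)$ is already established in Theorem~\ref{Theorem main}, so the plan is to prove the matching upper bound: every sequence $T$ of integers of length $\ell = {\rm D}(R^{\times}) + \sum_{i=1}^{r}(k_i-1)$ contains a nonempty subsequence $W$ with $\pi(W)$ idempotent modulo $n$. By Lemma~\ref{Lemma idempotent form}, this amounts to finding $W$ such that for every $i \in [1,r]$, either $\pi(W) \equiv 0 \pmod{p_i^{k_i}}$ or $\pi(W) \equiv 1 \pmod{p_i^{k_i}}$.

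The first step is a dichotomy that directly generalizes the two proven cases. For each $i$, let $M_i$ be the subsequence of terms of $T$ divisible by $p_i$. If $|M_i| \leq k_i - 1$ for every $i$, then the subsequence $T_0$ of terms coprime to $n$ has length at least $\ell - \sum_i(k_i-1) = {\rm D}(R^{\times})$, and the definition of the Davenport constant produces a nonempty subsequence of $T_0$ whose product is $\equiv 1 \pmod n$, which is idempotent. The substantive case is that $|M_j| \geq k_j$ for some $j$: any $k_j$ terms of $M_j$ have product $\equiv 0 \pmod{p_j^{k_j}}$, so one can attempt to commit to the ``zero strategy'' at the prime $p_j$.

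The natural next move is an induction on $\Omega(n) - \omega(n)$ (or on $r$). Having pre-selected $U_j \subseteq M_j$ with $|U_j| = k_j$, one would pass to $n' = n/p_j^{k_j}$ and seek, among the residues of $T \cdot U_j^{[-1]}$ modulo $n'$, a subsequence whose product, multiplied by $\pi(U_j) \bmod n'$, is idempotent in $\mathcal{S}_{\mathbb{Z}_{n'}}$; then $\pi(U_j \cdot W')$ is automatically $\equiv 0 \pmod{p_j^{k_j}}$ and idempotent modulo $n'$, hence idempotent modulo $n$. A parallel strategy using Case 2 when $k_1 = \cdots = k_r = 1$ in the residual problem may be needed to close the base.

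The main obstacle is that the Davenport constant does not split cleanly when one prime factor is stripped: ${\rm D}((\mathbb{Z}_{n/p_j^{k_j}})^{\times})$ can differ from ${\rm D}((\mathbb{Z}_n)^{\times})$ by more than a simple induction permits, since the factor $(\mathbb{Z}_{p_j^{k_j}})^{\times}$ disappears entirely from the unit group. Moreover, a greedy choice of $U_j$ may consume terms whose residues in the remaining coordinates were essential for realizing a unit product equal to the required inverse coset. Overcoming this will likely require a simultaneously balanced selection across all primes, for instance a weighted zero-sum lemma over $R^{\times}$ that, for each prime $p_i$, either allocates $k_i$ terms of $M_i$ to the zero strategy or reserves the coprime-to-$p_i$ terms for a Davenport-style unit argument, while guaranteeing that the resulting unit residue lies in the required coset of the Davenport obstruction. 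Constructing such a selection lemma appears to be the crux of the conjecture.
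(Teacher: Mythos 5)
You should first be aware that this statement is not proved in the paper: it is the closing conjecture, and Theorem~\ref{Theorem main} establishes only the lower bound ${\rm I}(\mathcal{S}_R)\geq {\rm D}(R^{\times})+\Omega(n)-\omega(n)$ in general, together with equality in the two extreme cases ($n$ a prime power, $n$ a product of distinct primes). So there is no paper proof to match, and your proposal is accordingly a program rather than a proof -- you concede as much yourself, since the ``selection lemma'' you identify at the end is exactly the missing content. Your opening dichotomy is sound and does generalize Case 1: if $|M_i|\leq k_i-1$ for all $i$, then at least ${\rm D}(R^{\times})$ terms are coprime to $n$ and the Davenport constant yields a nonempty subsequence with product $\equiv 1\pmod n$. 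Note, though, that the paper's Case 2 does not use such a dichotomy at all; it twists \emph{every} term into a unit via the Chinese Remainder Theorem, so that terms divisible by some $p_i$ remain usable in the unit-group argument -- a flexibility your scheme surrenders the moment it commits terms to a ``zero strategy'' at one prime.

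The inductive step has a genuine gap, for three concrete reasons. Quantitatively: after removing $U_j$ with $|U_j|=k_j$, the remaining sequence has length ${\rm D}(R^{\times})+\sum_{i\neq j}(k_i-1)-1$, which -- even granting ${\rm D}((\mathbb{Z}_{n'})^{\times})\leq {\rm D}(R^{\times})$ by subgroup monotonicity, since $(\mathbb{Z}_{n'})^{\times}$ is a direct factor of $R^{\times}$ -- falls one term short of the inductive threshold ${\rm D}((\mathbb{Z}_{n'})^{\times})+\sum_{i\neq j}(k_i-1)$ whenever the two Davenport constants coincide. Structurally: the residual problem is not the Erd\H{o}s--Burgess problem for $n'$ but a shifted variant -- you must find $W'$ (possibly empty) with $\pi(U_j)\pi(W')$ idempotent modulo $n'$, where $\pi(U_j)\bmod n'$ is an arbitrary fixed element -- and the inductive hypothesis (the conjecture for $n'$) says nothing about this coset version. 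Finally, the subsequences $M_i$ overlap: a term divisible by $p_ip_j$ lies in both $M_i$ and $M_j$, so a greedy choice of $U_j$ can simultaneously deplete the supply at other primes and consume residues needed for the Davenport step, which is precisely the balanced-allocation difficulty you name. Your assessment is therefore accurate: the reduction and dichotomy faithfully reproduce what the paper can do, the crux you isolate is real, and nothing in the paper closes it either -- the statement remains open.
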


\bigskip

\noindent {\bf Acknowledgements}

\noindent
This work is supported by NSFC (grant no. 61303023, 11501561).


\begin{thebibliography}{99}

\bibitem{Burgess69} D.A. Burgess, \emph{A problem on semi-groups}, Studia Sci. Math. Hungar., \textbf{4} (1969) 9--11.

\bibitem{CziszterDomoGero}    K. Cziszter, M. Domokos and A. Geroldinger, \emph{The interplay of invariant theory with multiplicative ideal theory
and with arithmetic combinatorics}, Multiplicative Ideal Theory and Factorization Theory, Springer, 2016, pp. 43--95.

\bibitem{GaoGero} W. Gao and A. Geroldinger, \emph{Zero-sum problems in finite abelian groups: A survey},
Expo. Math., \textbf{24} (2006) 337--369.

\bibitem{GaoLiPeng}    W. Gao, Y. Li and J. Peng, \emph{An upper bound for the Davenport constant of finite groups},  J. Pure Appl. Algebra, \textbf{218} (2014) 1838--1844.

\bibitem{GH} A. Geroldinger and F. Halter-Koch, \emph{Non-Unique
Factorizations. Algebraic, Combinatorial and Analytic Theory,}
Pure Appl. Math., vol. 278, Chapman $\&$ Hall/CRC,
2006.

\bibitem{GeroldingerRuzsa}
A. Geroldinger and I. Ruzsa, \emph{Combinatorial Number Theory and Additive Group
Theory}, Advanced Courses in Mathematics CRM Barcelona,
Birkh\H{a}user, 2009.







\bibitem{Gillam72} D.W.H. Gillam, T.E. Hall and N.H. Williams, \emph{On finite semigroups and idempotents}, Bull. Lond. Math. Soc., \textbf{4} (1972) 143--144.

\bibitem{Grynkiewiczmono} D.J. Grynkiewicz, \emph{Structural Additive Theory}, Developments in Mathematics, vol. 30, Springer, Cham, 2013.

\bibitem{Grynkiewicz13}    D.J. Grynkiewicz, \emph{The large Davenport constant II: General upper bounds}, J. Pure Appl. Algebra, \textbf{217} (2013) 2221--2246.

\bibitem{taovu} T. Tao and Van H. Vu, \emph{Additive Combinatorics}, Cambridge University Press, 2006.

\bibitem{wangDavenportII}  G. Wang, \emph{Davenport constant for semigroups II,}  J. Number Theory, \textbf{153} (2015) 124--134.

\bibitem{wangAddtiveirreducible}  G. Wang, \emph{Additively irreducible sequences in commutative semigroups,}  J. Combin. Theory Ser. A, \textbf{152} (2017)  380--397.

\bibitem{wangStruucture}  G. Wang, \emph{Structure of the largest idempotent-product free sequences in
semigroups,}  J. Number Theory (2018), https://doi.org/10.1016/j.jnt.2018.05.020.

\bibitem{wangErdos-burgess}  G. Wang, \emph{Erd\H{o}s-Burgess constant of the direct product of cyclic semigroups,} arXiv:1802.08791.

\bibitem{wanggao} G. Wang and W. Gao,
\emph{Davenport constant for semigroups,} Semigroup Forum,
\textbf{76} (2008) 234--238.

\bibitem{gaowangII} G. Wang and W. Gao,
Davenport constant of the multiplicative semigroup of the ring $\mathbb{Z}_{n_1}\oplus\cdots\oplus \mathbb{Z}_{n_r}$, arXiv:1603.06030.

\bibitem{wang-Hao-zhang}  H. Wang, J. Hao and L. Zhang,  \emph{On the Erd\H{o}s-Burgess constant of the multiplicative semigroup of a factor ring of $\mathbb{F}_q[x]$},  Int. J. Number Theory, to appear.

\bibitem{wang-zhang-qu} L. Zhang, H. Wang and Y. Qu,  \emph{A problem of Wang on Davenport constant for the multiplicative semigroup of the quotient ring of $\F_2[x]$},  Colloq. Math., \textbf{148} (2017) 123--130.

\end{thebibliography}
\end{document}